\documentclass[12pt]{article}

\usepackage[utf8]{inputenc}
\usepackage[T1]{fontenc}
\usepackage{amsmath,amsthm,amsfonts,amssymb}
\usepackage{graphicx}
\usepackage{epstopdf}
\usepackage{booktabs}
\usepackage{float}
\usepackage{algorithm}
\usepackage{algorithmic}
\usepackage{hyperref}
\usepackage[margin=1in]{geometry}

\numberwithin{equation}{section}

\hypersetup{
  colorlinks=true,
  linkcolor=blue,
  citecolor=blue,
  urlcolor=blue,
  pdftitle={Minimization of a One Class of Maximum Functions with Applications to Some Eigenvalue Problems},
  pdfauthor={\.I. C. Ismayilov, T. B\"uy\"ukk\"oro\u{g}lu, V. Dzhafarov}
}

\newtheorem{theorem}{Theorem}[section]
\newtheorem{proposition}[theorem]{Proposition}

\theoremstyle{definition}
\newtheorem{example}[theorem]{Example}
\newtheorem{remark}[theorem]{Remark}

\title{Minimization of a One Class of Maximum Functions\\
with Applications to Some Eigenvalue Problems}

\author{
  \.Ipek Cafer Ismayilov\thanks{Department of Mathematics, Faculty of Science,
  Marmara University, 34722 Kad\i k\"oy \.Istanbul, T\"urkiye (ipek.ismayilov@marmara.edu.tr).
  The first author is supported by T\"{U}B\.{I}TAK (The Scientific and Technological Research Council of T\"urkiye) under the 2210-A National Graduate Scholarship Program.}
  \and
  Taner B\"uy\"ukk\"oro\u{g}lu\thanks{Department of Mathematics, Faculty of Science,
  Eski\c{s}ehir Technical University, 26555 Eski\c{s}ehir, T\"urkiye (\texttt{tbuyukkoroglu@eskisehir.edu.tr}, \texttt{vcaferov@eskisehir.edu.tr}).}
  \and
  Vakif Dzhafarov\footnotemark[2]
}

\date{}

\begin{document}

\maketitle

\begin{abstract}
In this paper we consider the problem of minimization of a convex function that
can be expressed as a maximum of affine linear functions. A convergence theorem is
established and the minimizing sequence is constructed. The computational time is
linear with respect to the step number. The obtained results are applied to eigenvalue
problems such as eigenvalue minimization, symmetric stabilization, and the Riccati
matrix inequalities. A number of examples are provided.
\end{abstract}

\noindent\textbf{Keywords:} convex optimization, eigenvalue minimization, Riccati matrix inequality, linear programming.

\noindent\textbf{MSC 2020:} 15A18, 90C25, 93D05.

\section{Introduction}

Let $ X \subset \mathbb{R}^d $ be a convex and compact set, and let $ Y $ be a nonempty set. Suppose that $ a(y): Y \to \mathbb{R}^d $ and $ b(y): Y \to \mathbb{R} $ are bounded functions satisfying $ \|a(y)\| \leq K_1 $ and $ |b(y)| \leq K_2 $ for all $ y \in Y $. Consider the following convex, nondifferentiable function $\varphi: X \to \mathbb{R}$ defined by
\begin{equation} \label{denklem1}
  \varphi(x)=\max_{y \in Y} \left(\langle x,a(y) \rangle + b(y) \right).
\end{equation}
It is assumed that for all $x \in X$, the maximizing point $y$ in \eqref{denklem1} exists and can be computed easily.

We consider the problem of minimizing $\varphi(x)$ over $X$, that is, to find $x_{\mathrm{opt}} \in X$ such that
\begin{equation} \label{denklem2}
  \min_{x \in X} \varphi(x)=\varphi(x_{\mathrm{opt}}).
\end{equation}

A sequence $\{x^k\}$ is called a minimizing sequence if $\varphi(x^k)$ converges and
\[
  \lim_{k\to\infty} \varphi(x^k) = \min_{x\in X} \varphi(x).
\]

Given a convex function $\phi: \mathbb{R}^d \to \mathbb{R}$, a vector $g \in \mathbb{R}^d$ is called a subgradient of $\phi$ at $z=z_0$ if
\[
  \phi(z)-\phi(z_0) \geq \langle g, z-z_0 \rangle
\]
for all $z$ \cite{Rockafellar73}. The set of all subgradients of $\phi$ at $z_0$ is called the subdifferential of $\phi$ at $z_0$, denoted by $\partial \phi(z_0)$. It is well known that $\partial \phi(z_0) \neq \emptyset$.

Recall that a point $z_0 \in \mathbb{R}^d$ is called a local minimum of $\phi(z)$ if there exists an open set $U$ containing $z_0$ such that $\phi(z)\geq \phi(z_0)$ for all $z \in U$. An important property of convex functions is the following.
\begin{theorem}\label{thm11}
    Let $\phi: \mathbb{R}^d \to \mathbb{R}$ be convex. Then any local minimum point $z_0$ is the global minimum, that is
    \[
      \phi(z_0)=\min_{z \in \mathbb{R}^d} \phi(z).
    \]
\end{theorem}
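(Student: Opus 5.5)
The plan is to argue by contradiction, using only the definition of convexity and the openness of $U$. Suppose $z_0$ is a local minimum of $\phi$ with neighborhood $U$, and that $z_0$ is not a global minimum. Then there is a point $z_1 \in \mathbb{R}^d$ with $\phi(z_1) < \phi(z_0)$. The idea is to move from $z_0$ toward $z_1$ along the segment joining them, and to show that points on this segment arbitrarily close to $z_0$ have strictly smaller values than $\phi(z_0)$. This will contradict local minimality.

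Concretely, for $t \in (0,1]$ set $z_t = (1-t) z_0 + t z_1$. Convexity of $\phi$ gives
\[
  \phi(z_t) \leq (1-t)\phi(z_0) + t\phi(z_1) = \phi(z_0) - t\bigl(\phi(z_0)-\phi(z_1)\bigr) < \phi(z_0),
\]
since $t>0$ and $\phi(z_0)-\phi(z_1)>0$. Since $U$ is open and contains $z_0$, there is $\varepsilon>0$ with the ball $\{z : \|z-z_0\|<\varepsilon\}$ contained in $U$. Because $\|z_t - z_0\| = t\|z_1-z_0\|$, we have $z_t \in U$ as soon as $0 < t < \varepsilon/\|z_1-z_0\|$. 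Here $z_1 \neq z_0$ automatically, because $\phi(z_1)\ne\phi(z_0)$. For such $t$ the inequality $\phi(z_t) < \phi(z_0)$ contradicts $\phi(z)\ge\phi(z_0)$ on $U$. Hence $\phi(z)\ge\phi(z_0)$ for all $z\in\mathbb{R}^d$, so the minimum over $\mathbb{R}^d$ is attained at $z_0$ and equals $\phi(z_0)$.

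There is no real obstacle here. The only point needing a little care is choosing $t$ small enough, via the radius of a ball inside $U$, so that $z_t$ lies in the neighborhood. An alternative route uses the subdifferential recalled above: one would show $0 \in \partial\phi(z_0)$ from local minimality, and the subgradient inequality then yields global minimality immediately. Establishing $0\in\partial\phi(z_0)$ essentially requires the same segment argument, however, so the direct convexity argument above is the one I would present.
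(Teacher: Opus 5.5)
Your proof is correct and complete. Convexity along $z_t=(1-t)z_0+tz_1$ gives $\phi(z_t)<\phi(z_0)$ for all $t\in(0,1]$, and any $0<t<\min\{1,\varepsilon/\|z_1-z_0\|\}$ places $z_t$ in $U$, which yields the contradiction. The paper gives no proof of this theorem and simply cites it as a well-known property of convex functions, so there is nothing to compare against; your segment argument is the standard proof.
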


Recall Kelley's cutting plane algorithm for convex minimization over a box. Consider the minimization problem of $\phi(z)$ over the box $Z=\{z \in \mathbb{R}^d: z_{\min} \leq z \leq z_{\max}\}$. Suppose $z^1, z^2, \dots, z^k \in Z$ and at least one subgradient $g^1 \in \partial \phi(z^1), \dots , g^k \in \partial \phi(z^k)$ is easy to calculate. Denote
\[
  \phi_k(z)=\max_{1 \leq i \leq k} \left[ \phi(z^i)+\langle g^i, z-z^i \rangle \right], \quad
  L_k=\min_{z \in B} \phi_k(z), \quad U_k=\min_{1 \leq i \leq k} \phi(z^i).
\]
Let $z^{k+1}$ be a minimizer in the definition of $L_k$, that is $L_k=\phi_k(z^{k+1})$.

$L_k$ is increasing, $U_k$ is decreasing and
\[
  L_k \leq \phi^* \leq U_k,
\]
where $\phi^*=\min_{z \in Z} \phi(z)$.
\begin{theorem}[\cite{Boyd91}]
  Let a tolerance number $\varepsilon>0$ be given. Then there exists $k$ such that
  \[
    0 \leq \phi(z^k)-\phi^* < \varepsilon.
  \]
\end{theorem}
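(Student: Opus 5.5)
The argument is by contradiction, using compactness of the box $Z$ together with a uniform bound on the subgradients. Suppose that $\phi(z^k)-\phi^*\geq\varepsilon$ for every $k$. Since $\phi_k\leq\phi$ on $Z$ (each cutting plane is a global underestimator by the subgradient inequality), we have $L_k\leq\phi^*$. Hence $\phi(z^{k+1})-\phi_k(z^{k+1})=\phi(z^{k+1})-L_k\geq \phi(z^{k+1})-\phi^*\geq\varepsilon$. We will show this is impossible for all $k$.

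\textbf{Uniform Lipschitz bound.} A convex function on $\mathbb{R}^d$ is finite and locally Lipschitz, so it is Lipschitz on a slightly larger compact box $Z'\supset Z$. Every subgradient at a point of $Z$ is therefore bounded by some $G$, that is, $\|g^i\|\leq G$ for all $i$. (Alternatively, the subdifferential mapping is locally bounded.) This is the main technical step, though it is standard. It also gives $|\phi(z)-\phi(w)|\leq G\|z-w\|$ on $Z$.

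\textbf{Key estimate.} For any $i\leq k$, the $i$-th cutting plane enters $\phi_k$, so
\[
\phi_k(z^{k+1})\geq \phi(z^i)+\langle g^i,z^{k+1}-z^i\rangle\geq \phi(z^i)-G\|z^{k+1}-z^i\|.
\]
Combined with $\phi(z^{k+1})\leq\phi(z^i)+G\|z^{k+1}-z^i\|$, this yields
\[
\varepsilon\leq \phi(z^{k+1})-\phi_k(z^{k+1})\leq 2G\|z^{k+1}-z^i\|\quad\text{for all } i\leq k.
\]
Thus the points $z^1,z^2,\dots$ are pairwise at distance at least $\varepsilon/(2G)$.

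\textbf{Contradiction.} The sequence $\{z^k\}$ lies in the compact set $Z$, so it has a convergent, hence Cauchy, subsequence. This contradicts the uniform separation just obtained. Equivalently, only finitely many points with pairwise distance at least $\varepsilon/(2G)$ fit in $Z$, by a volume-packing count. Hence some $k$ satisfies $\phi(z^k)-\phi^*<\varepsilon$, and $\phi(z^k)-\phi^*\geq0$ holds trivially because $z^k\in Z$. If $G=0$, then $\phi$ is constant on $Z$ and the claim is immediate.
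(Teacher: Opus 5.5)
The paper gives no proof of this statement. It is quoted from \cite{Boyd91}, and your argument is correct and is essentially the standard proof found there. The inequality $L_k\le\phi^*$ turns the contrary hypothesis into $\phi(z^{k+1})-\phi_k(z^{k+1})\ge\varepsilon$. The common bound $G$ on the subgradients and on the Lipschitz constant of $\phi$ over $Z$ converts this into $\|z^{k+1}-z^i\|\ge\varepsilon/(2G)$ for all $i\le k$. Compactness, or the volume count, then rules out infinitely many such points.

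The closest argument the authors actually write out is the proof of Theorem~\ref{theorem21} for their own iteration. That iteration is in fact Kelley's method, since $\varphi(x^i)+\langle a(y^i),x-x^i\rangle=\langle x,a(y^i)\rangle+b(y^i)$. There the subgradient bound $K_1$ is a standing hypothesis, whereas you derive it from convexity. Instead of a quantitative separation, they pass to a convergent subsequence $x^{k_m}\to\tilde x$ and let $m\to\infty$ in $0\ge s-\varphi(x^{k_m})\ge\langle x^{k_{m+1}}-x^{k_m},a(y^{k_m})\rangle$. This shows every limit point is optimal, and then Theorem~\ref{theorem24} gives $\varphi(x^k)\to\min_X\varphi$.

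Your route yields more. The separation holds for every index $j$ with $\phi(z^j)-\phi^*\ge\varepsilon$ against all earlier iterates, so only finitely many such $j$ can occur, with an explicit packing bound. The same few lines therefore already give $\phi(z^k)\to\phi^*$, which is the content of Theorems~\ref{theorem21} and~\ref{theorem24}. They also give a worst-case count of $\varepsilon$-bad iterations, which the paper's qualitative limit-point argument does not provide.

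One cosmetic point: the case $G=0$ needs no separate treatment, since any $G>0$ bounding the subgradients will do.
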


Some improvement of Kelley's cutting-plane method has been done in \cite{Drori2015} where the minimization of a convex globally Lipschitz function over a ball is considered. The method in \cite{Drori2015} requires knowing in advance the number of iterations to be performed. In \cite{Bagirov2019} for constrained nonconvex optimization problems the augmented Lagrangian-based method is developed. At each step the discrete gradient method is applied to minimize the augmented Lagrangian.

In this paper, we consider the minimization problem over a box or over a polytope of a wider class of convex functions, which is the pointwise maximum of an affine-linear family. At each step of the iterative procedure the solution of a linear programming (LP) problem is required. It is shown that the sequence obtained from the solutions of these LP problems is a minimizing sequence. To show this, firstly we prove that every limit point of this sequence is an optimal point (Theorem \ref{theorem21}). Then, using Theorem \ref{theorem21} we prove the main theorem (Theorem \ref{theorem24}).

In the last section, the obtained results are applied to some eigenvalue problems, such as eigenvalue minimization, symmetric stabilization, Riccati matrix inequalities.

The results of this paper are an extension of the results on common Lyapunov functions obtained in \cite{Aksoy2025,Dzhafarov22,Dzhafarov2015}.

\section{Minimizing sequence}
\label{sec:convergence}

In this section, we give a minimization algorithm and a convergence theorem that solve the problem. This result improves Kelley's cutting-plane method.
\begin{proposition}
  Let $\hat{x} \in X$ be a given and $\hat{y} \in Y$ be the maximizing vector of $\langle \hat{x}, a(y) \rangle + b(y)$. Then for all $x \in X$ the following are satisfied
\begin{eqnarray}
\varphi(\hat{x}) = \langle \hat{x}, a(\hat{y})  \rangle + b(\hat{y}), \label{denklem3} \\ 
\varphi(x) \geq \langle x, a(\hat{y}) \rangle + b(\hat{y}). \label{denklem4}
\end{eqnarray}
\end{proposition}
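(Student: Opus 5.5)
This is a direct consequence of the definition \eqref{denklem1} of $\varphi$ as a pointwise maximum, together with the standing assumption that the maximizing point in \eqref{denklem1} exists. No earlier theorem is needed. The plan is to verify \eqref{denklem3} from the choice of $\hat{y}$, and then \eqref{denklem4} from the fact that a maximum dominates each of the terms it is taken over.

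For \eqref{denklem3}, I will use that $\hat{y}$ is by hypothesis a maximizer of $y \mapsto \langle \hat{x}, a(y)\rangle + b(y)$ over $Y$. Such a maximizer exists because we assumed that for every $x \in X$ the maximizing point in \eqref{denklem1} exists. Hence
\[
  \varphi(\hat{x}) = \max_{y \in Y}\left(\langle \hat{x}, a(y)\rangle + b(y)\right) = \langle \hat{x}, a(\hat{y})\rangle + b(\hat{y}),
\]
which is exactly \eqref{denklem3}.

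For \eqref{denklem4}, I fix an arbitrary $x \in X$. Since $\hat{y} \in Y$ is one admissible element in the maximum defining $\varphi(x)$, we get
\[
  \varphi(x) = \max_{y \in Y}\left(\langle x, a(y)\rangle + b(y)\right) \geq \langle x, a(\hat{y})\rangle + b(\hat{y}).
\]
This holds for every $x \in X$. Note that $\hat{y}$ depends only on $\hat{x}$, not on $x$, so the right-hand side is a single affine function of $x$.

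There is essentially no obstacle here; the only point needing care is that the maximum in \eqref{denklem1} is attained, which is part of the standing assumptions. It is also worth remarking that combining \eqref{denklem3} and \eqref{denklem4} gives
\[
  \varphi(x) - \varphi(\hat{x}) \geq \langle a(\hat{y}), x - \hat{x}\rangle \quad \text{for all } x \in X .
\]
Thus $a(\hat{y})$ acts as a subgradient of $\varphi$ at $\hat{x}$ relative to $X$, and the affine function $\langle x, a(\hat{y})\rangle + b(\hat{y})$ is a valid cutting plane. This is presumably how the proposition will be used in the algorithm.
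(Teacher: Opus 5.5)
Your proposal is correct and takes the same approach as the paper, which simply says the result follows directly from the definition \eqref{denklem1} of $\varphi$. You spell out the two steps: \eqref{denklem3} from the choice of $\hat{y}$ as a maximizer, and \eqref{denklem4} because $\hat{y}$ is one admissible element of $Y$. Your closing subgradient remark matches the observation the paper makes immediately after the proposition.
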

\begin{proof}
 The proof follows directly from the definition of $\varphi(x)$ (see \eqref{denklem1}).
\end{proof}

As follows from \eqref{denklem3} and \eqref{denklem4}, the vector $a(\hat{y})$ is a subgradient vector of $\varphi(x)$ at $x=\hat{x}$ since for all $x \in X$
\[
  \varphi(x) - \varphi(\hat{x}) \geq \langle x-\hat{x} , a(\hat{y}) \rangle.
\]

Now construct an increasing scalar sequence $\{s_k\}$ as follows.

Starting from any $x^1 \in X$, define the vector $y^1 \in Y$ that maximizes
\[
  \langle x^1, a(y) \rangle + b(y),
\]
that is, $\varphi(x^1)=\langle x^1, a(y^1) \rangle + b(y^1)$, and define
\begin{equation} \label{denklem5}
  s_1=\min_{x \in X} \left( \langle x, a(y^1) \rangle + b(y^1)\right).
\end{equation}
Let the minimizing vector in \eqref{denklem5} be $x^2 \in X$, and let the maximizing vector of
\[
  \langle x^2, a(y) \rangle + b(y)
\]
be $y^2 \in Y$. Define
\begin{equation} \label{denklem6}
  s_2=\min_{x \in X} \max_{1 \leq i \leq 2} \left[ \langle x, a(y^i) \rangle + b(y^i) \right].
\end{equation}
Let the minimizing vector in \eqref{denklem6} be $x^3 \in X$, and let the maximizing vector of
\[
  \langle x^3, a(y) \rangle + b(y)
\]
be $y^3 \in Y$, and define
\[
  s_3=\min_{x \in X} \max_{1 \leq i \leq 3} \left[ \langle x,a(y^i) \rangle + b(y^i) \right].
\]
Continuing, at the $k$-th step define
\begin{equation} \label{denklem7}
  s_k=\min_{x \in X} \max_{1 \leq i \leq k} \left[ \langle x,a(y^i) \rangle + b(y^i) \right]
\end{equation}

In the sequel, we assume that the set $X$ is defined by linear inequalities. Then the number $s_k$ and the minimizing vector $x^{k+1}$ in \eqref{denklem7} can be evaluated by solving an LP problem (see \cite{Boyd91,Dzhafarov22}). In the following proposition, we describe the computation of $s_k$ and $x^{k+1}$ by LP problem in the case when $X$ is a box:
\[
  X=B_a=\{(x_1,x_2,\dots,x_d): \ |x_i| \leq a, \, i=1,2,\dots,d\}.
\]
\begin{proposition} [\cite{Boyd91,Dzhafarov22}]\label{thm:lpform}
Let $k$ be fixed. Consider the following LP problem:
\[
  \left\{
  \begin{aligned}
    &c(x,t)=t \to \min, \\
    &-a \leq x_j \leq a,\quad j=1,2,\dots,d, \\
    &-\sigma \leq t \leq \sigma,\quad \sigma=a\sqrt{d}K_1+K_2, \\
    &\langle x, a(y^i)\rangle + b(y^i) \leq t,\quad i=1,2,\dots,k.
  \end{aligned} \right.
\]
If $(x^*,t^*)$ is an optimal solution of this LP problem, then $s_k=c(x^*,t^*)=t^*$, and $x^*$ is a minimizer in \eqref{denklem7}.
\end{proposition}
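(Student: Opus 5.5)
The claim is the standard epigraph reformulation of the finite max-of-affine problem \eqref{denklem7}. Write $\psi_k(x)=\max_{1\le i\le k}\left[\langle x,a(y^i)\rangle+b(y^i)\right]$, so that $s_k=\min_{x\in B_a}\psi_k(x)$. This minimum exists because $\psi_k$ is continuous and $B_a$ is compact. I will show that the optimal value of the LP equals $s_k$ and that the $x$-component of any LP optimum minimizes $\psi_k$.

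The first step is to check that the extra box constraint $-\sigma\le t\le\sigma$ cuts off nothing relevant. For $x\in B_a$ we have $\|x\|\le a\sqrt d$. By Cauchy--Schwarz and the bounds $\|a(y)\|\le K_1$, $|b(y)|\le K_2$,
\[
\left|\langle x,a(y^i)\rangle+b(y^i)\right|\le a\sqrt d K_1+K_2=\sigma .
\]
Hence $|\psi_k(x)|\le\sigma$ for all $x\in B_a$. Consequently, for every $x\in B_a$ the pair $(x,\psi_k(x))$ is feasible. In particular the feasible set is nonempty, and since it is compact an optimal solution exists.

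Next I compare the two optimal values in both directions.

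\emph{LP value is at most $s_k$.} Take a minimizer $\bar x$ of $\psi_k$ over $B_a$. The pair $(\bar x,\psi_k(\bar x))$ is feasible, so $t^*\le\psi_k(\bar x)=s_k$.

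\emph{LP value is at least $s_k$.} Feasibility of $(x^*,t^*)$ gives $x^*\in B_a$ and $\langle x^*,a(y^i)\rangle+b(y^i)\le t^*$ for all $i$. Taking the maximum over $i$ yields $\psi_k(x^*)\le t^*$. Hence
\[
s_k\le\psi_k(x^*)\le t^*\le s_k ,
\]
so all these quantities are equal. This gives $t^*=s_k$ and $\psi_k(x^*)=s_k$, meaning $x^*$ is a minimizer in \eqref{denklem7}.

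The only step needing care is the bound showing $\sigma$ is large enough, specifically that $|\psi_k|\le\sigma$ on $B_a$, which uses $\|x\|\le a\sqrt d$. Everything else is routine comparison of the two problems.
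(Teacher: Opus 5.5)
Your proof is correct. The paper gives no proof of its own and simply cites \cite{Boyd91,Dzhafarov22}; your epigraph argument, the two-sided comparison $s_k\le\psi_k(x^*)\le t^*\le s_k$, is the standard reasoning behind that citation. Your check that $|\psi_k|\le\sigma$ on $B_a$ (via $\|x\|\le a\sqrt d$ and Cauchy--Schwarz) is exactly what justifies the paper's choice $\sigma=a\sqrt d\,K_1+K_2$, which the paper leaves unexplained.
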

This proposition shows that each step of the procedure reduces to solving an LP problem, which forms the basis of the algorithm presented next.

The following observations can be made easily.
\begin{itemize}
  \item[\textit{a})] The sequence $\{s_k\}$ is monotone increasing and bounded, and therefore has a limit $s_k \to s$.
  \item[\textit{b})] $\varphi(x) \geq s_k$ for all $x \in X$ and $k=1,2,3,\dots$.
\end{itemize}

Since $X$ is compact, the sequence $\{x^k\}$ has a convergent subsequence. In the sequel, we prove that any limit point of the sequence $\{x^k\}$ (i.e. the limit of a convergent subsequence of $\{x^k\}$) is a minimizing point of $\varphi(x)$, and that $\{x^k\}$ is a minimizing sequence for the problem \eqref{denklem1}-\eqref{denklem2}.
\begin{theorem}\label{theorem21}
  Let $\tilde{x} \in X$ be any limit point of $\{x^k\}$ (recall that $x^{k+1}$ is the minimizer in \eqref{denklem7}). Then $\tilde{x}$ is a minimizing point of $\varphi(x)$, that is,
  \[
    \min_{x \in X} \varphi(x) = \varphi(\tilde{x}).
  \]
\end{theorem}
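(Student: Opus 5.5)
We argue in the standard cutting-plane way: we compare $\varphi(x^{k})$ with the lower bounds $s_k$ and show that the gap vanishes along the subsequence converging to $\tilde{x}$. Let $x^{k_j}\to\tilde{x}$. By observation \textit{b}), $\varphi(x)\ge s_k$ for all $x\in X$ and all $k$, so $\min_{x\in X}\varphi(x)\ge s$, where $s=\lim s_k$. It therefore suffices to prove $\varphi(\tilde{x})\le s$. Then $s\le \min_X\varphi\le\varphi(\tilde{x})\le s$ forces equality.

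The key step exploits the affine cut generated at a previous iterate. Fix two indices $m<n$ of the subsequence, say $m=k_i$ and $n=k_j$ with $i<j$. Since $x^{n}$ is a minimizer in \eqref{denklem7} with $k=n-1\ge m$, the cut with index $m$ is among those in the max. Hence
\[
\langle x^{n},a(y^{m})\rangle+b(y^{m})\le s_{n-1}\le s .
\]
By \eqref{denklem3}, $\varphi(x^{m})=\langle x^{m},a(y^{m})\rangle+b(y^{m})$. Subtracting and using $\|a(y)\|\le K_1$ gives
\[
\varphi(x^{m})\le s+\langle x^{m}-x^{n},a(y^{m})\rangle\le s+K_1\|x^{m}-x^{n}\|.
\]

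Next we pass to the limit. First let $j\to\infty$ with $i$ fixed, so that $x^{n}\to\tilde{x}$. This gives $\varphi(x^{k_i})\le s+K_1\|x^{k_i}-\tilde{x}\|$. Then let $i\to\infty$. Because $\varphi$ is a supremum of affine functions with uniformly bounded slopes $\|a(y)\|\le K_1$, it is Lipschitz on $X$ with constant $K_1$, hence continuous. Therefore $\varphi(\tilde{x})=\lim_i\varphi(x^{k_i})\le s$, which finishes the argument.

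The main obstacle is to make sure the cut from iterate $m$ is actually present in the LP defining $x^{n}$. This holds because $x^{n}$ minimizes the max over indices $1,\dots,n-1$, and $m\le n-1$. For the same reason I keep the index bookkeeping explicit ($x^{k+1}$ is the minimizer for $s_k$) rather than applying the inequality at a single index. The uniform bound $K_1$ is what removes any dependence on the particular $y^{m}$, and it also gives the Lipschitz continuity of $\varphi$ without any further assumption on $Y$.
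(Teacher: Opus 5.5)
Your proof is correct and follows essentially the same route as the paper. Both arguments use the fact that the cut generated at $x^{m}$ appears in the LP defining $x^{n}$ for $n>m$; combined with $s_{n-1}\le s$, the identity $\varphi(x^{m})=\langle x^{m},a(y^{m})\rangle+b(y^{m})$ and the bound $K_1$, this gives $\varphi(x^{m})\le s+K_1\|x^{m}-x^{n}\|$. The differences are only cosmetic: the paper takes consecutive subsequence indices $p=k_m$, $q+1=k_{m+1}$ and passes to a single limit, whereas you use an iterated limit, and you justify the continuity of $\varphi$ through its $K_1$-Lipschitz property, which the paper simply asserts.
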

\begin{proof}
  Consider natural numbers $p$ and $q$ with $q \geq p$. Assume that $y^p \in Y$ maximizes $\langle x^p, a(y) \rangle + b(y)$ and $y^q$ maximizes $\langle x^q, a(y) \rangle + b(y)$. By \eqref{denklem7}
  \begin{equation} \label{denklem8}
    s_q=\min_{x \in X} \max_{1 \leq i \leq q} \left( \langle x, a(y^i) \rangle + b(y^i)\right).
  \end{equation}
  Let the minimizer in \eqref{denklem8} be $x^{q+1} \in X$:
  \[
    s_q = \max_{1 \leq i \leq q} \left[ \langle x^{q+1}, a(y^i) \rangle + b(y^i)\right].
  \]
  The sequence $\{s_q\}$ is increasing and bounded, therefore it has limit $s=\lim_{q \to \infty} s_q$. The following can be written:
  \begin{align*}
      & s_q \geq \langle x^{q+1}, a(y^p) \rangle + b(y^p), \quad s_q \leq s,\\
      & s \geq \langle x^{q+1}, a(y^p) \rangle + b(y^p),\\
      & \varphi(x^p)= \langle x^p, a(y^p) \rangle + b(y^p).
\end{align*}
Combining these relations yields
  \begin{equation} \label{denklem9}
    0 \geq s-\varphi(x^p) \geq \langle x^{q+1} - x^p , a(y^p) \rangle.
  \end{equation}
  Let $\{x^{k_m}\}$ be a subsequence of $\{x^k\}$ which converges to $\tilde{x}$: $x^{k_m} \to \tilde{x}$ as $m \to \infty$. We show that $\tilde{x}$ is an optimal point. Take $p=k_m$, $q+1=k_{m+1}$ in \eqref{denklem9}, then
  \begin{equation} \label{denklem10}
      0 \geq s-\varphi(x^{k_m}) \geq \langle x^{k_{m+1}}-x^{k_m}, a(y^{k_m}) \rangle.
  \end{equation}
  Take the limit as $m \to \infty$ in \eqref{denklem10}. Since $\varphi(x)$ is continuous, $\{x^{k_m}\}$ is convergent and $\|a(y^{k_m})\| \leq K_1$, we have
  \begin{gather*}
  |\langle x^{k_{m+1}}-x^{k_m}, a(y^{k_m}) \rangle| 
    \leq \|x^{k_{m+1}}-x^{k_m}\|\cdot \|a(y^{k_m})\| 
    \leq K_1 \|x^{k_{m+1}}-x^{k_m}\| \to 0,\\
  0 \geq s - \varphi(\tilde{x}) \geq 0 \;\;\Rightarrow\;\; \varphi(\tilde{x})=s.
\end{gather*}
On the other hand, from $\varphi(x) \geq s_k$ it follows that $\varphi(x) \geq s$ for all $x \in X$, therefore
  \[
    \min_{x \in X} \varphi(x) = \varphi(\tilde{x}).
  \]
\end{proof}
Now, using Theorem \ref{theorem21} we prove that $\{x^k\}$ is a minimizing sequence.
\begin{theorem}\label{theorem24}
  The sequence obtained above $\{x^k\}$ is a minimizing sequence for $\varphi(x)$, that is, 
  $\varphi(x^k)$ has a limit as $k \to \infty$ and
  \[
    \lim_{k \to \infty} \varphi(x^k) = \min_{x \in X} \varphi(x).
  \]
\end{theorem}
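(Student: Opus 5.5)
The plan is a standard subsequence argument built on Theorem \ref{theorem21}. Write $\varphi^*=\min_{x\in X}\varphi(x)$; this minimum exists because $\varphi$ is convex and finite on $\mathbb{R}^d$, hence continuous, and $X$ is compact. It suffices to show that every subsequence of $\{\varphi(x^k)\}$ has a further subsequence converging to $\varphi^*$. That gives both existence of the limit and its value at once: a real sequence all of whose subsequences have sub-subsequences converging to the same number converges to that number.

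First I bound the sequence. Since $\|a(y)\|\le K_1$, $|b(y)|\le K_2$, and $X$ is bounded, we have $|\varphi(x)|\le K_1\sup_{x\in X}\|x\|+K_2$, so $\{\varphi(x^k)\}$ is bounded. Also $\varphi(x^k)\ge \varphi^*$ for every $k$.

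Now take an arbitrary subsequence $\{x^{k_j}\}$. Since $X$ is compact, it has a further subsequence $\{x^{k_{j_m}}\}$ converging to some $\tilde{x}\in X$. This $\tilde{x}$ is a limit point of the whole sequence $\{x^k\}$, so Theorem \ref{theorem21} gives $\varphi(\tilde{x})=\varphi^*$. By continuity of $\varphi$, $\varphi(x^{k_{j_m}})\to\varphi(\tilde{x})=\varphi^*$.

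The conclusion can also be phrased through $\limsup$ and $\liminf$. Pick a subsequence realizing $\limsup_k\varphi(x^k)$, pass to a convergent sub-subsequence of the points, and apply Theorem \ref{theorem21}; this shows $\limsup=\varphi^*$. The same reasoning gives $\liminf=\varphi^*$. Hence $\lim_{k\to\infty}\varphi(x^k)$ exists and equals $\min_{x\in X}\varphi(x)$.

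The only point needing care, the main obstacle, is the hypothesis of Theorem \ref{theorem21}. It must apply to any limit point of $\{x^k\}$, not only to limits of particular subsequences chosen in its proof. As stated, it covers any limit point, so no extra work is needed.

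It is also worth noting, as a cross-check, that Theorem \ref{theorem21}'s proof already gives $\varphi^*=s=\lim s_k$.
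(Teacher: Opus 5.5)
Your proposal is correct and follows essentially the same route as the paper: boundedness of $\{\varphi(x^k)\}$, compactness of $X$ to extract a convergent sub-subsequence of points, Theorem~\ref{theorem21} to identify its limit as a minimizer, and continuity of $\varphi$ to show that every subsequential limit of $\varphi(x^k)$ equals $\min_{x\in X}\varphi(x)$. The paper phrases the last step as the fact that a bounded sequence whose convergent subsequences all share one limit converges, which is equivalent to your sub-subsequence (or $\limsup$/$\liminf$) formulation.
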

\begin{proof}
    Since $X$ is compact, $a(y)$ and $b(y)$ are bounded, the sequence $\varphi(x^k)$ is bounded.
    We show that any convergent subsequence of $\varphi(x^k)$ converges to the same limit $\min_{x \in X} \varphi(x)$. Let $\varphi(x^{k_l})$ be any convergent subsequence of $\varphi(x^k)$. The sequence $\{x^{k_l}\}$ is bounded and has a convergent subsequence $x^{k_{l_j}}\to a$ as $j \to \infty$. By Theorem~\ref{theorem21}, $a$ is an optimal point:
    \[
      \varphi(a)=\min_{x \in X} \varphi(x).
    \]
    On the other hand, by continuity $\varphi(x^{k_{l_j}}) \to \varphi(a)$ as $j \to \infty$. As a result, $\varphi(a)$ is a limit point of the convergent sequence $\varphi(x^{k_l})$. Therefore $\varphi(x^{k})$ converges to $\varphi(a)$:
    \[
      \lim_{k\to \infty} \varphi(x^k)=\varphi(a)=\min_{x\in X} \varphi(x).
    \]
\end{proof}

For the minimization of $\varphi : X \to \mathbb{R}$ defined by \eqref{denklem1}, the following algorithm can be proposed:

\begin{algorithm}[H]
\caption{}
\label{algorithm21}
\begin{algorithmic}[1]
  \STATE Calculate $K_1$ and $K_2$.
  \STATE Choose a tolerance number $\varepsilon > 0$ and any initial point $x^1 \in X$.
  \FOR{$k = 1,2,3,\dots$}
    \STATE Construct the sequences $s_k$ and $x^k$ by solving the corresponding LP problems (see Proposition~\ref{thm:lpform}).
    \IF{$\varphi(x^k) - s_k < \varepsilon$ for some $k$}
      \STATE \textbf{Stop}; $x^k$ is the minimizer.
    \ENDIF
  \ENDFOR
\end{algorithmic}
\end{algorithm}

\begin{remark}
  As follows from Proposition \ref{thm:lpform}, the number of variables $d$ is constant, whereas the number of constraints is $k+2d+1$, where $k$ is the step number. Then, according to \cite{Megiddo1984}, the computational time of the Algorithm \ref{algorithm21} is linear in the step number $k$. Interior-point polynomial time algorithms for general convex programming were given in \cite{Boyd04,Nesterov94}. 
\end{remark}

\begin{example}\label{example27}
Consider the following problem
\[
\begin{array}{c}
\varphi(x) \to \min, \quad x=(x_1,x_2) \in \mathbb{R}^2,\\
\displaystyle
\varphi(x)=\max_{(y_1,y_2)\in[-5,5]^2}
\left(x_1\sin(y_1-y_2) + x_2\cos(y_1+y_2) + y_1^2 - y_2^2\right).
\end{array}
\]
With $a=10$, $d=2$, $K_1=1.414$, and $K_2=24.997$, we calculate $\sigma=44.991296$. Take the tolerance number $\varepsilon=10^{-6}$. Starting from $x^1=(-2,1)$, the iteration results are summarized in 
Table~\ref{tab:example27}. 
\begin{table}[H]
\centering
\caption{Iteration results for Example~\ref{example27}}
\label{tab:example27}
\begin{tabular}{ccccc}
\toprule
$k$ & $x^{k}$ & $y^{k}$ & $\varphi(x^{k})$ & $s_k$ \\ 
\midrule
1  & $(-2,1)$ & $(5,0.345)$ & $27.4689352$ & $8.9817384$ \\
2  & $(10,-10)$ & $(-4.382,0.707)$ & $36.6120351$ & $20.2355678$ \\
3  & $(10,9.024)$ & $(-5,-0.676)$ & $41.2090678$ & $22.2390053$ \\
$\vdots$ & $\vdots$ & $\vdots$ & $\vdots$ & $\vdots$ \\
23 & $(0,-0.579)$ & $(-5,0.290)$ & $24.9172969$ & $24.9172758$ \\
24 & $(0,-0.574)$ & $(5,0.287)$ & $24.9172766$ & $24.9172762$ \\
\bottomrule
\end{tabular}
\end{table}

Thus Algorithm~\ref{algorithm21} yields the minimum value
\[
  \min_{x\in[-10,10]^2}\varphi(x)\approx \varphi(x^{24})=24.9172766.
\]
On the other hand $x^{24}=(0,-0.574)$ is an interior point of $[-10,10]^2$, therefore by Theorem \ref{thm11}
\[
 \min_{x\in\mathbb{R}^2}\varphi(x)=\min_{x\in[-10,10]^2} \varphi(x)\approx 24.9172766.
\]
\end{example}

\section{Some applications}
\label{sec:applications}

In this section we give applications of the obtained results to some eigenvalue problems.

\subsection{Eigenvalue minimization}

Let $A_0,A_1,\dots,A_d$ be $n\times n$ dimensional symmetric matrices, and $x=(x_1,\dots,x_d)$ be the decision vector. Consider the matrix function
\[
  x \mapsto A(x) := A_0 + x_1 A_1 + \cdots + x_d A_d,
\]
and the following convex, non-smooth minimization problem
\begin{equation}\label{eq31}
  \varphi(x) = \lambda_{\max}(A(x)) \to \inf, \, x \in \mathbb{R}^d.
\end{equation}

Optimization problems of the form above arise, for example, in mechanics, stability analysis, and combinatorics (see \cite{Jarre93}).  

Define 
\[
  \lambda_{\mathrm{opt}} = \inf \{ \varphi(x): x \in \mathbb{R}^d \}.
\]
The following statements are well known:
\begin{enumerate}
  \item[\textit{i})] If $\lambda_{\mathrm{opt}}$ is finite, then $\lambda_{\mathrm{opt}} \in [\lambda_{\min}(A_0),\lambda_{\max}(A_0)]$.
  \item[\textit{ii})] $\lambda_{\mathrm{opt}}$ is finite if and only if there is no vector $x$ such that $\sum_{i=1}^d x_i A_i$ is positive definite.
\end{enumerate}

Take $a>0$ and define
\begin{equation} \label{eqs32}
  \lambda_a=\min \{\varphi(x): x \in B_a\},
\end{equation}
where
\[
  B_a=\{x=(x_1,\dots,x_d): |x_i| \leq a, i=1,2,\dots,d\}.
\]
Obviously $\lambda_{\mathrm{opt}} \leq \lambda_a$.

The function $\lambda_{\max}(A(x))$ has the form $(\ref{eq31})$ with $Y = \{ y \in \mathbb{R}^n : \|y\|=1 \}$. Indeed,
\begin{align*}
    \varphi(x) &= \lambda_{\max}(A(x)) 
= \max_{\|y\|=1} y^T \left(A_0 + x_1 A_1 + \cdots + x_d A_d\right) y\\
&= \max_{\|y\|=1} \left( \langle x, a(y)\rangle + b(y) \right),
\end{align*}
where
\[
a(y) = \left(y^T A_1 y, \dots, y^T A_d y\right), 
\qquad b(y) = y^T A_0 y.
\]
Since
\[
\begin{array}{ll}
  \|a(y)\| = \sqrt{(y^T A_1 y)^2 + \cdots + (y^T A_d y)^2}
  & \leq |y^T A_1 y| + \cdots + |y^T A_d y|\\
  & \leq \|A_1\|_F + \cdots + \|A_d\|_F,
\end{array}
\]
and
\[
  |b(y)| = |y^T A_0 y| \leq \|A_0\|_F,
\]
where $\|A\|_F$ denotes the Frobenius matrix norm \cite{HornJohnson94}. Therefore (see the Introduction)
\[
  K_1 = \|A_1\|_F + \cdots + \|A_d\|_F, 
  \qquad K_2 = \|A_0\|_F.
\]
\begin{proposition}\label{prop3.1}
\vspace{0.2em} 
\begin{enumerate}
  \item[1)] $\lambda_a \to \lambda_{\mathrm{opt}}$ as $a \to \infty$.
  \item[2)] If $\lambda_{a_*} < \lambda_{\min}(A_0)$ for some $a_*>0$, then $\lambda_{\mathrm{opt}}=-\infty$.
  \item[3)] If for some $a_*>0$ the minimum in \eqref{eqs32} is attained at some interior point of $B_{a_*}$, then 
  $\lambda_{\mathrm{opt}}=\lambda_{a_*}$.
\end{enumerate}
\end{proposition}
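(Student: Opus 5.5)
The three parts are independent, and I will treat them in order. The common tools are the monotonicity of $\lambda_a$ in $a$ (since $B_a\subset B_{a'}$ for $a<a'$, the map $a\mapsto\lambda_a$ is nonincreasing and bounded below by $\lambda_{\mathrm{opt}}$), continuity and convexity of $\varphi(x)=\lambda_{\max}(A(x))$, and Theorem \ref{thm11}.

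\emph{Part 1).} Because $\lambda_a$ is nonincreasing and $\lambda_a\ge\lambda_{\mathrm{opt}}$, the limit $L=\lim_{a\to\infty}\lambda_a$ exists in $[-\infty,\infty)$ and satisfies $L\ge\lambda_{\mathrm{opt}}$. For the reverse inequality, take any real $M>\lambda_{\mathrm{opt}}$ (this covers $\lambda_{\mathrm{opt}}=-\infty$ too). By definition of the infimum there is $x_M\in\mathbb{R}^d$ with $\varphi(x_M)<M$. Choose $a\ge\max_i|(x_M)_i|$, so that $x_M\in B_a$. Then $\lambda_{a'}\le\lambda_a\le\varphi(x_M)<M$ for every $a'\ge a$, hence $L\le M$. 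Letting $M\downarrow\lambda_{\mathrm{opt}}$ gives $L=\lambda_{\mathrm{opt}}$.

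\emph{Part 2).} I argue by contraposition using statement \textit{i)}. If $\lambda_{\mathrm{opt}}$ were finite, then \textit{i)} would give $\lambda_{\mathrm{opt}}\ge\lambda_{\min}(A_0)$. But $\lambda_{\mathrm{opt}}\le\lambda_{a_*}<\lambda_{\min}(A_0)$, a contradiction. Since $\lambda_{\mathrm{opt}}<+\infty$ trivially, we get $\lambda_{\mathrm{opt}}=-\infty$.

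If one prefers not to rely on \textit{i)} as a black box, it can be proved quickly. Suppose $\sum x_iA_i$ has a unit eigenvector $v$ with positive Rayleigh quotient $v^T(\sum x_iA_i)v>0$. Then for $t>0$,
\[
\varphi(-tx)\le v^TA_0v - t\,v^T\Big(\sum x_iA_i\Big)v\to-\infty.
\]
Otherwise $v^T(\sum x_iA_i)v\le 0$ for every direction $x$ and unit vector $v$. In that case I would test $A(x)$ against a unit vector $v$ chosen so that $v^T(\sum x_iA_i)v\ge0$. Such a $v$ exists whenever $\sum x_iA_i$ is not negative definite, and it gives $\varphi(x)\ge v^TA_0v\ge\lambda_{\min}(A_0)$. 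This case split is the delicate step. The cleaner route is simply to cite \textit{i)}, which the paper lists as well known, and that is the route I will take.

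\emph{Part 3).} Let $x_*$ be an interior point of $B_{a_*}$ with $\varphi(x_*)=\lambda_{a_*}$. Then there is an open set $U\subset B_{a_*}$ containing $x_*$, and $\varphi(x)\ge\lambda_{a_*}=\varphi(x_*)$ for all $x\in U$. So $x_*$ is a local minimum of $\varphi$ on $\mathbb{R}^d$. The function $\varphi$ is convex on $\mathbb{R}^d$, being a pointwise maximum of affine functions $y^TA(x)y$. Theorem \ref{thm11} then shows $x_*$ is a global minimum, so $\lambda_{\mathrm{opt}}=\varphi(x_*)=\lambda_{a_*}$.

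The only real obstacle is Part 2), and only if \textit{i)} has to be proved from scratch; otherwise every step is a direct application of monotonicity, the definition of the infimum, or Theorem \ref{thm11}.
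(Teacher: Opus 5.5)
Your proof is correct and follows the paper's own (very terse) argument: 1) from the monotonicity of $\lambda_a$ together with the definition of the infimum, 2) from \textit{i}), and 3) because an interior minimizer over $B_{a_*}$ is a local minimum of the convex function $\varphi$, so Theorem~\ref{thm11} applies. You are right to set aside your optional sketch of \textit{i}). The bound $\varphi(-tx)\le v^TA_0v-t\,v^T(\sum_i x_iA_i)v$ goes the wrong way, since a single Rayleigh quotient only bounds $\lambda_{\max}$ from below. The correct split is whether some $S=\sum_i x_iA_i$ is negative definite: if so, $\varphi(tx)\le\lambda_{\max}(A_0)+t\lambda_{\max}(S)\to-\infty$; if not, your estimate $\varphi(x)\ge v^TA_0v\ge\lambda_{\min}(A_0)$ holds for every $x$.
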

\begin{proof}
1) is obvious. 2) follows from \textit{i}). 3) follows from Theorem~\ref{thm11}.
\end{proof}
Summarizing, the following simple algorithm can be proposed.
\begin{algorithm}[H]
\caption{}
\label{algorithm31}
\begin{algorithmic}[1]
  \STATE Take $a>0$ and solve \eqref{eqs32} using Algorithm \ref{algorithm21}.
  \IF{the minimum is attained at some interior point of $B_a$}
    \STATE \textbf{Stop}; the obtained $x^k$ is the minimizing sequence.
  \ELSE
    \STATE If the minimum is attained at a boundary point of $B_a$, then increase $a$ until the minimum in \eqref{eqs32} is attained at some interior point of $B_a$ or until $\lambda_a < \lambda_{\min}(A_0)$. In the last case, $\lambda_{\mathrm{opt}}=-\infty$.
  \ENDIF
\end{algorithmic}
\end{algorithm}

\begin{example}\label{example32}
Consider the matrices
\[
    A_0=\begin{bmatrix}
        1 & -1 \\
        -1 & 1
    \end{bmatrix},\quad
    A_1=\begin{bmatrix}
        1 & 2 \\
        2 & -1
    \end{bmatrix},\quad
    A_2=\begin{bmatrix}
        -3 & 4 \\
        4 & -1
    \end{bmatrix},\quad
    A_3=\begin{bmatrix}
        2 & -1 \\
        -1 & -1
    \end{bmatrix}.
\]
Define
\[
A(x)=A_0+x_1A_1+x_2A_2+x_3A_3, \qquad \varphi(x)=\lambda_{\max}(A(x)).
\]
We investigate the minimization of $\varphi(x)$ over $\mathbb{R}^3$. Take $X=B_1=\{(x_1,x_2,x_3): x \in [-1,1]^3\}$, ($a=1$) and the result of application of Algorithm~\ref{algorithm31} with the initial point $x^1=(1,0,0)\in X$ and tolerance number $\varepsilon=10^{-6}$ are presented in Table~\ref{tablo2}.
\begin{table}[H]
\centering
\caption{Iteration results for Example~\ref{example32}}
\label{tablo2}
\begin{tabular}{cccc}
\toprule
$k$ & $x^{k}$ & $\varphi(x^{k})$ & $s_k$ \\
\midrule
1 & $(1,0,0)$        & $2.41421356$  & $-2.80330086$ \\
2 & $(-1,-1,-1)$     & $8.68465844$  & $-2.56130792$ \\
3 & $(-1,0.994,-1)$  & $2.52660709$  & $-0.16666667$ \\
4 & $(-0.666,0.833,1)$ & $-0.16666667$ & $-0.16666667$ \\
\bottomrule
\end{tabular}
\end{table}

Since $\varphi(x^4)=-0.16666667<\lambda_{\min}(A_0)=0$, it follows from Proposition~\ref{prop3.1} that $\lambda_{\mathrm{opt}}=-\infty$.
\end{example}

\begin{example}\label{example33}
Consider $A(x)=A_0+x_1A_1+x_2A_2$, where
\begin{align*}
    A_0&=\begin{bmatrix}
         2 & -3 &  2 &  1 \\
        -3 & -4 & -1 &  0 \\
         2 & -1 & -1 & -2 \\
         1 &  0 & -2 &  3
    \end{bmatrix},\quad
    A_1=\begin{bmatrix}
         1 &  0 & -1 &  2 \\
         0 &  2 &  4 &  0 \\
        -1 &  4 & -3 & -1 \\
         2 &  0 & -1 &  1
    \end{bmatrix}, \\
    A_2&=\begin{bmatrix}
        -3 &  1 &  1 & -1 \\
         1 & -1 &  0 & -1 \\
         1 &  0 & -1 &  0 \\
        -1 & -1 &  0 &  2
    \end{bmatrix}.
\end{align*}
We minimize $\varphi(x)=\lambda_{\max}(A(x))$ over $X=B_1=\{(x_1,x_2): \ x\in [-1,1]^2\}$ by applying Algorithm~\ref{algorithm31}. With the initial point $x^1=(1,0)\in X$ and the tolerance number $\varepsilon=10^{-6}$, the iteration results are presented in Table~\ref{tab:lambda_max_4x4}.
\begin{table}[H]
\centering
\caption{Iteration results for Example~\ref{example33}}
\label{tab:lambda_max_4x4}
\begin{tabular}{cccc}
\toprule
$k$ & $x^{k}$ & $\varphi(x^{k})$ & $s_k$ \\ 
\midrule
1  & $(1,0)$   & $7.4280899$ & $-1.1242361$ \\
2  & $(-1,1)$  & $6.7242701$ & $3.1004383$ \\
3  & $(0.165,1)$   & $5.8176082$ & $3.7335785$ \\
$\vdots$ & $\vdots$ & $\vdots$ & $\vdots$ \\
10 & $(0.012,0.085)$   & $3.9849441$ & $3.9849440$ \\
\bottomrule
\end{tabular}
\end{table}
\begin{figure}[H]
  \centering
  \includegraphics[width=0.5\textwidth]{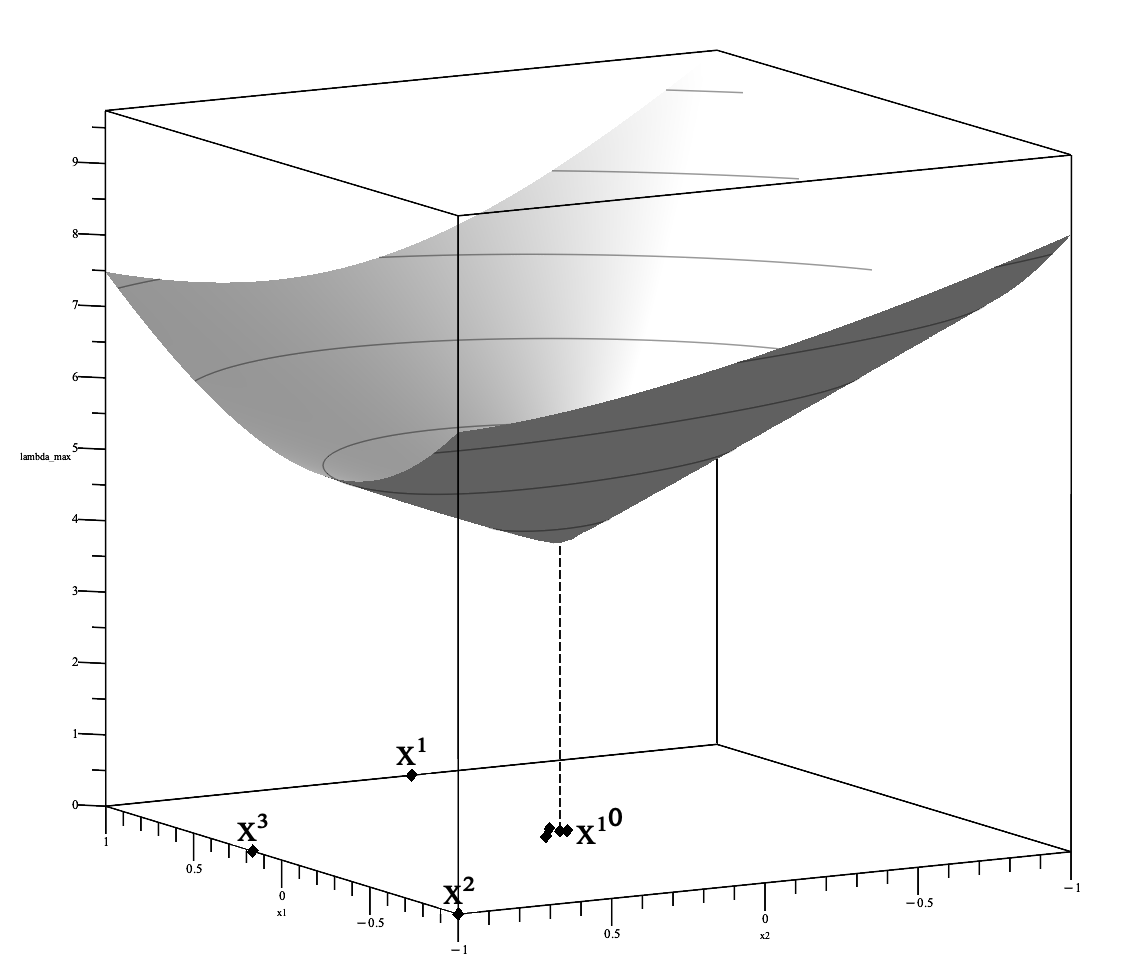}
  \caption{The graph of $\varphi(x)=\lambda_{\max}(A(x))$ for Example~\ref{example33}.}
  \label{figure1}
\end{figure}
Thus, $\lambda_{\mathrm{opt}}=3.9849441$ is attained at $x^{10}=(0.012,0.085)$. Moreover,
\[
    \lambda_{\mathrm{opt}} \in [\lambda_{\min}(A_0),\lambda_{\max}(A_0)]=[-5.262,4.208].
\]

By Theorem~\ref{thm11}
\[
  \min_{x \in \mathbb{R}^2} \varphi(x) = \min_{x \in [-1,1]^2} \varphi(x)=3.9849441.
\]
\end{example}

\section{Symmetric stabilization with maximal stability margin}
\label{sec:stabilization}

Given the control system
\[
  \dot{x} = A x + B u
\]
with unknown linear feedback $u=Kx$, where $A \in \mathbb{R}^{n \times n}$ and $B \in \mathbb{R}^{n \times m}$, consider the following problem:  

Find $K \in \mathbb{R}^{m \times n}$ such that $A+BK$ is symmetric and stable (i.e., $\lambda_{\max}(A+BK)<0$), and
\[
  \lambda_{\max}(A+BK) \;\to\; \min 
  \quad \text{subject to } \|K\|_\infty \leq a,
\]
where $a>0$ is sufficiently large and 
\[
  \|K\|_\infty = \max_{i,j} |k_{ij}|.
\]

For a stable (unstable) matrix, maximal perturbation bounds that preserves stability (unstability) properties have been investigated in numerous works (see, for example, \cite{Kalinina2022} and references therein).

After the parametrization
\[
K=
\begin{bmatrix}
x_{1} & x_{2} & \cdots & x_{n} \\
x_{n+1} & x_{n+2} & \cdots & x_{2n} \\
\vdots & \vdots & \ddots & \vdots \\
x_{d-n+1} & x_{d-n+2} & \cdots & x_{d}
\end{bmatrix}, \qquad (d=mn),
\]
the function $x \mapsto \lambda_{\max}(A+BK)$ is transformed into the form \eqref{denklem1} with $Y = \{\, y \in \mathbb{R}^n : \|y\|=1 \,\}$. The symmetricity condition of $A+BK$ yields $\tfrac{n(n-1)}{2}$ linear equalities in $x$. Denote this set by $\tilde{Z}$, that is
\[
  \tilde{Z} = \{ x=(x_1,x_2,\dots,x_d) \in \mathbb{R}^d : A+BK \ \text{is symmetric} \}.
\]
The function $\varphi(x)$ to be minimized for this problem is
\begin{equation}\label{eq:3.3}
  \varphi(x) = \lambda_{\max}(A+BK)
  = \max_{\|y\|=1} \left( \langle x,a(y)\rangle + y^T A_0 y \right),
\end{equation}
where $a(y) = \mathrm{vec}(B^Tyy^T)$ and $\mathrm{vec}(\cdot)$ denotes the vectorization operator that stacks the columns of a matrix into a single vector.

The function \eqref{eq:3.3} should be minimized on the convex set $X = B_a \cap \tilde{Z}$. For the global minimum, Theorem~\ref{thm11} is not applicable in this case, since the set $X$ has empty interior in $\mathbb{R}^d$.

Now compute the bounds $K_1$ and $K_2$. Obviously $K_2 = \|A_0\|_F$. On the other hand
\[
\begin{array}{ll}
\|uv^T\|_F^2 &= \mathrm{tr}\left[(uv^T)^T(uv^T)\right]
= \mathrm{tr}\left[(u^Tu)(vv^T)\right]
=(u^Tu)\,\mathrm{tr}(vv^T)\\
&= (u^Tu)(v^Tv) = \|u\|^2 \|v\|^2,
\end{array}
\]
for any $u,v \in \mathbb{R}^n$. Applying this property to $a(y)$ with $u=B^Ty$, $v=y$ we obtain
\[
\|a(y)\| = \|\mathrm{vec}(B^Tyy^T)\| 
= \|B^Tyy^T\|_F = \|B^Ty\|\|y\|.
\]
Since $\|y\|=1$, it follows that
\[
\|a(y)\| \leq \max_{\|y\|=1} \|B^Ty\|.
\]  
On the other hand $\max_{\|y\|=1} \|B^Ty\|$ is the spectral norm of the matrix $B$ which is less than or equal to the Frobenius norm of $B$. Thus one can choose
\[
K_1 = \|B\|_F = \sqrt{\mathrm{tr}(B^TB)}.
\]

\begin{example}\label{example41}
Consider the control system $\dot{x}=Ax+Bu$, where
\[
A=\begin{bmatrix}
3 & 1\\
4 & -3
\end{bmatrix},\quad
B=\begin{bmatrix}
-1 & 3 & 1\\
3 & -1 & 2
\end{bmatrix}.
\]
The feedback matrix $K$ has dimension $3\times2$, therefore $d=6$. The minimization is performed over $X=B_{1}=[-1,1]^6$. With the initial point $x^1=(1,0,0,0,0,0)$ and the tolerance number $\varepsilon=10^{-6}$, Algorithm~\ref{algorithm21} converges at the $13$-th iteration, where $\varphi(x^{13})-s_{13}\approx0$. The iteration results are summarized in Table~\ref{table41}.
\begin{table}[H]
\centering
\caption{Iteration results for Example~\ref{example41} with $a=1$}
\label{table41}
\begin{tabular}{cccc}
\hline
$k$ & $x^k$ & $\varphi(x^k)$ & $s_k$ \\
\hline
1 & $(1,0,0,0,0,0)$ & $3.14005494$ & $-4.33293514$\\
2 & $(-1,-1,-1,-1,-0.3333,-1)$ & $0.00000000$ & $-1.38629477$\\
3 & $(0.3863,-1,-1,-1,1,-0.8411)$ & $0.49782436$ & $-0.57887774$\\
4 & $(-0.4211,-1,-1,-1,0.2455,-1)$ & $-0.20384567$ & $-0.27594062$\\
\vdots & \vdots & \vdots & \vdots \\
12 & $(-0.5471,-1,-1,-1,0.1196,-1)$ & $-0.21923072$ & $-0.21923341$\\
13 & $(-0.5483,-1,-1,-1,0.1184,-1)$ & $-0.21923200$ & $-0.21923206$\\
\hline
\end{tabular}
\end{table}
At iteration $k=13$, the feedback matrix is
\[
K=\begin{bmatrix}
-0.5483 & -1\\
-1 & 0.1184\\
-1 & -1
\end{bmatrix},
\]
which yields
\[
A+BK=
\begin{bmatrix}
-0.4517 & 1.3551\\
1.3551 & -8.1184
\end{bmatrix},
\]
whose eigenvalues are $-0.2192$ and $-8.3509$. Hence, the closed-loop matrix $A+BK$ is stable.
\end{example}

\begin{example}\label{example42}
Consider the control system $\dot{x}=Ax+Bu$, where
\[
A=\begin{bmatrix}
 2 & 2 & 4\\
 1 & -2 & 3\\
 5 & 1 & 1
\end{bmatrix},\quad
B=\begin{bmatrix}
 1 & 2\\
 4 & 0\\
-1 & 1
\end{bmatrix}.
\]
The feedback matrix $K$ has dimension $2\times3$, therefore $d=3$. Take the minimization over $X=B_{10}=[-10,10]^3$. With the initial point $x^1=(1,0,0)$ and the tolerance number $\varepsilon=10^{-6}$, Algorithm~\ref{algorithm21} terminates at the $15$-th iteration since $\varphi(x^{15})-s_{15} \approx 0$. The iteration results are presented in Table~\ref{table42}.
\begin{table}[H]
\centering
\caption{Iteration results for Example~\ref{example42} with $a=10$}
\label{table42}
\begin{tabular}{cccc}
\hline
$k$ & $x^k$ & $\varphi(x^k)$ & $s_k$ \\
\hline
1 & $(1,0,0,0,0,0)$ & $7.46757500$ & $-41.15952669$\\
2 & $(-7.25, -10, -10, -10, -0.499, -0.625)$ & $1.20689918$ & $-14.83979132$\\
3 & $(0.523, -10, -10, 5.547, 3.386, -6.455)$ & $-1.03179807$ & $-1.42111053$\\
4 & $(-5.976, -10, -10, -7.453, 0.136, -1.579)$ & $-0.32905942$ & $-1.32359330$\\
\vdots & \vdots & \vdots & \vdots \\
14 & $(-1.138, -10, -10, 2.222, 2.555, -5.208)$ & $-1.07782050$ & $-1.07782181$\\
15 & $(-1.132, -10, -10, 2.234, 2.558, -5.212)$ & $-1.07782123$ & $-1.07782132$\\
\hline
\end{tabular}
\end{table}
The corresponding feedback matrix is
\[
K=\begin{bmatrix}
-1.132 & -10     &  2.558\\
-10    &   2.234 & -5.212
\end{bmatrix},
\]
which yields
\[
A+BK=
\begin{bmatrix}
 -19.132 &  -3.530 & -3.867\\
  -3.530 & -42     & 13.234\\
  -3.867 &  13.234 & -6.771
\end{bmatrix},
\]
with eigenvalues $-46.585$, $-20.241$, and $-1.077$. 
Thus, the matrix $A+BK$ is stable.
\end{example}

\section{Riccati Matrix Inequality}

Consider the Riccati matrix inequality
\begin{equation}\label{eq:riccati}
A^TP + PA - PBQ^{-1}B^TP + R > 0,
\end{equation}
where $A,P,R \in \mathbb{R}^{n\times n}$, $B \in \mathbb{R}^{n\times m}$, 
$Q \in \mathbb{R}^{m\times m}$, $Q>0$. 
In this inequality the matrix $P>0$ is unknown. We assume that $\lambda_{\min}(R)<0$, otherwise the solution can be evaluated easily. By the Schur Complement Theorem \cite{Boyd04,Polyak21}, the nonlinear inequality \eqref{eq:riccati} is equivalent to the following linear matrix inequality of dimension $(m+n)\times(m+n)$:
\[
f_1(P) = 
-\begin{bmatrix}
A^TP+PA & PB \\
B^TP    & 0
\end{bmatrix}
-
\begin{bmatrix}
R & 0 \\
0 & Q
\end{bmatrix} < 0.
\]
Adding $f_2(P)=-P<0$, we have to solve two matrix inequalities
\begin{equation}\label{eq:35}
\lambda_{\max}(f_1(P))<0,\qquad \lambda_{\max}(f_2(P))<0,
\end{equation}
with respect to the symmetric matrix $P$. Let
\[
P=P(x)=
\begin{bmatrix}
x_1 & x_2 & \cdots & x_n \\
x_2 & x_{n+1} & \cdots & x_{2n-1} \\
\vdots & \vdots & \ddots & \vdots \\
x_n & x_{2n-1} & \cdots & x_d
\end{bmatrix}, 
\qquad \left(d=\tfrac{n(n+1)}{2}\right),
\]
be a parametrization of $P$. Then \eqref{eq:35} can be written as
\begin{equation}\label{eq:36}
\varphi(x)=\max_{i=1,2}\, \lambda_{\max}\!\left(f_i(P(x))\right) < 0.
\end{equation}
We have
\begin{align} \label{eq:37}
    \lambda_{\max}\left(f_1(P(x))\right) &= \max_{\|u\|=1} \left(u^T
      \begin{bmatrix}
        -A^TP(x)-P(x)A & -P(x)B \\ \notag
        -B^TP(x) & 0
      \end{bmatrix} u
      + u^T
      \begin{bmatrix}
        -R & 0 \\
        0  & -Q
      \end{bmatrix} u \right)\\
      &=\max_{\|u\|=1}\left(a_1x_1+\cdots+a_d x_d+u^T
      \begin{bmatrix}
        -R & 0 \\
        0  & -Q
      \end{bmatrix} u\right),
\end{align}
and
\begin{equation} \label{eq:3.8}
\lambda_{\max}\left(f_2(P(x))\right)
= \max_{\|v\|=1}\left( \tilde{a}_1x_1+\cdots+\tilde{a}_d x_d \right).
\end{equation}

Therefore, \eqref{eq:36} can be rewritten as
\begin{equation}\label{eq:39}
\varphi(x) = \max_{y \in Y} \left( \langle x, a(y) \rangle + b(y) \right).
\end{equation}
Here the set $Y$ is defined by
\[
Y = \{\, (i,u,v) : i \in \{1,2\},\ \|u\|=1,\ \|v\|=1,\ u \in \mathbb{R}^{m+n},\ v \in \mathbb{R}^n \,\}.
\]

For any $y=(i,u,v) \in Y$, the vector $a(y)$ and scalar $b(y)$ are defined according to \eqref{eq:37}–\eqref{eq:3.8}. If $i=1$, then $a(y)$ and $b(y)$ depend only on $u$. If $i=2$, then $a(y)$ depends only on $v$ and $b(y)=0$.

Given any $x_* \in \mathbb{R}^d$, the maximizing vector $y_*=(i_*,u_*,v_*) \in Y$ in \eqref{eq:39} can be easily determined as follows. Evaluate the maximal eigenvalues of $f_1(P(x_*))$ and $f_2(P(x_*))$:  
\begin{itemize}
  \item[\textit{i})] If $\lambda_{\max}(f_1(P(x_*))) \geq \lambda_{\max}(f_2(P(x_*)))$, then $i_*=1$, $u_*$ is the corresponding unit eigenvector of $f_1(P(x_*))$, and $v_*$ is an arbitrary unit vector.  
  \item[\textit{ii})] If $\lambda_{\max}(f_1(P(x_*))) < \lambda_{\max}(f_2(P(x_*)))$, then $i_*=2$, $u_*$ is an arbitrary unit vector, and $v_*$ is the corresponding unit eigenvector of $f_2(P(x_*))$.  
\end{itemize}

The bounds $K_1$ and $K_2$ (see Introduction) for this problem can be chosen as (proofs omitted)
\[
K_1 = \sqrt{2}\,\sqrt{4\|A\|_F^2 + 2\|B\|_F^2} + \sqrt{2}, 
\qquad 
K_2 = \sqrt{\|R\|_F^2 + \|Q\|_F^2}.
\]
The compact set $X$ in (1.2) can be chosen as a box $B_a = \{\, x=(x_i): |x_i| \leq a \,\}$, where $a$ is sufficiently large.
\begin{example}\label{example5_1}
Consider the matrices
\[
    A=\begin{bmatrix}
         2 & -1 &  0 \\
        -1 &  2 & -1 \\
         0 & -1 &  2
    \end{bmatrix},\quad
    B=\begin{bmatrix}
         1 & 0 \\
         1 & 1 \\
        -2 & 1 
    \end{bmatrix},\quad
    R=\begin{bmatrix}
         1 & -1 &  2 \\
        -1 &  1 & -1 \\
         2 & -1 &  1
    \end{bmatrix},\quad
    Q=\begin{bmatrix}
         4 & -2  \\
        -2 &  3
    \end{bmatrix}.
\] 
Since $n=3$, we have $d=6$. Take $X=[-2,2]^6$. For $a=2$, the constants are $K_1=4\sqrt{10}+\sqrt{2}$, $K_2=4\sqrt{3}$, and $\sigma=75.8241$. Starting from $x^1=(1,0,0,0,0,0)\in X$, the iteration results are shown in Table~\ref{table_example6_1}.
\begin{table}[H]
\centering
\caption{Iteration results for Example~\ref{example5_1} with $a=2$}
\label{table_example6_1}
\begin{tabular}{ccccc}
\toprule
$k$ & $x^k$ & $i^k$ & $\varphi(x^k)$ & $s_k$ \\ 
\midrule
1   & $(1,0,0,0,0,0)$ & $2$ & $0$ & $-2$ \\
2   & $(0, 0, 0, 2, 0, 2)$ & $1$ & $0.58879334$ & $-2$ \\
3   & $(2, -0.574, -0.064, 2, 0, 2)$ & $1$ & $0.1465551$ & $-1.4784721$ \\
4   & $(1.086, -2, 2, 1.478, 2, 2)$ & $1$ & $12.1676481$ & $-1.4623624$ \\
5   & $(0.351, -2, -2, 1.462, 1.775, 2)$ & $1$ & $2.7302196$ & $-1.2645996$ \\
$\vdots$ & $\vdots$ & $\vdots$ & $\vdots$ & $\vdots$ \\
60  & $(2, -0.885, -1.055, 1.042, 0.452, 2)$ & $2$ & $-0.4881598$ & $-0.4867385$ \\
61  & $(2, -0.884, -1.066, 1.040, 0.460, 2)$ & $1$ & $-0.4881603$ & $-0.4881609$ \\
\bottomrule
\end{tabular}
\end{table}
With tolerance $\varepsilon=10^{-6}$, the minimum value is
\[
   \min_{x\in X}\varphi(x)\approx -0.4881603.
\]
At iteration $k=61$, $\varphi(x^{61})=-0.4881603<0$, and the associated matrix 
\[
P=P(x^{61})=
\begin{bmatrix}
 2     & -0.884 & -1.066 \\
-0.884 &  1.040 &  0.460 \\
-1.066 &  0.460 &  2
\end{bmatrix}
\]
is positive definite. For this $P$, both $\lambda_{\max}(f_1(P))<0$ and $\lambda_{\max}(f_2(P))<0$ hold, confirming that condition \eqref{eq:35} is satisfied. Thus, in this case the Riccati inequality has a solution.
\end{example}

\begin{example}\label{example5_2}
Consider the matrices
\[
    A=\begin{bmatrix}
        -4 &  3 &  1 \\
        -3 & -3 &  1 \\
         2 & -1 & -3
    \end{bmatrix},\quad
    B=\begin{bmatrix}
        1 & 0 \\
        0 & 1 \\
        1 & 1
    \end{bmatrix},\quad
    R=\begin{bmatrix}
         0 & -1 &  0 \\
        -1 & -1 & -1 \\
         0 & -1 &  0
    \end{bmatrix},\quad
    Q=\begin{bmatrix}
        2 & 0 \\
        0 & 2
    \end{bmatrix}.
\]
Since $n=3$, we have $d=6$. Take $X=[-1,1]^6$, $\varepsilon=10^{-6}$. Using the bounds defined earlier, with $a=1$ we obtain $\sigma=61.18064988$. Starting from $x^1=(1,0,0,0,0,0)\in X$, the iteration results are given in Table~\ref{table_example6_2}.
\begin{table}[!htbp]
\centering
\caption{Iteration results for Example~\ref{example5_2}}
\label{table_example6_2}
\begin{tabular}{ccccc}
\toprule
$k$ & $x^k$ & $i^k$ & $\varphi(x^k)$ & $s_k$ \\ 
\midrule
1  & $(1,0,0,0,0,0)$ & $1$ & $8.788644$ & $-19.810755$ \\
2  & $(-1,-1,1,1,-1,-1)$ & $1$ & $15.118699$ & $-11.937854$ \\
3  & $(-1,1,0.864,-1,1,-1)$ & $1$ & $7.376292$ & $-6.176726$ \\
$\vdots$ & $\vdots$ & $\vdots$ & $\vdots$ & $\vdots$ \\
34 & $(0.129,-0.077,0.082,-0.156,-0.169,0.012)$ & $2$ & $0.2618315$ & $0.2618305$ \\
35 & $(0.129,-0.077,0.082,-0.156,-0.169,0.012)$ & $1$ & $0.2618305$ & $0.2618305$ \\
\bottomrule
\end{tabular}
\end{table}
The minimum value is
\[
\min_{x\in X}\varphi(x)\approx 0.2618305.
\]
Since this strictly positive minimum is attained at an interior point of $X$, Theorem \ref{thm11} implies that enlarging $X$ (by increasing $a$) does not change the situation. In other words,
\[
  \min_{x \in \mathbb{R}^6} \varphi(x)=0.2618305>0. 
\]
Therefore, the Riccati matrix inequality has no solution in this example.
\end{example}



\begin{thebibliography}{99}

\bibitem{Aksoy2025}
B.~Aksoy, T.~B\"{u}y\"{u}kk\"{o}ro\u{g}lu, and V.~Dzhafarov,
On the numerical verification of a counterexample on parameter-dependent Lyapunov functions,
\textit{Applied Mathematics and Computation}, 492 (2025), p.~129246.

\bibitem{Bagirov2019}
A.~M.~Bagirov, G.~Ozturk, and R.~Kasimbeyli,
A sharp augmented Lagrangian-based method in constrained non-convex optimization,
\textit{Optimization Methods and Software}, 34 (2019), pp.~462--488.

\bibitem{Boyd91}
S.~Boyd and C.~Barratt,
\textit{Linear Controller Design: Limits of Performance},
Prentice Hall, Englewood Cliffs, NJ, 1991.

\bibitem{Boyd04}
S.~Boyd and L.~Vandenberghe,
\textit{Convex Optimization},
Cambridge University Press, Cambridge, 2004.

\bibitem{Drori2015}
Y.~Drori and M.~Teboulle,
An optimal variant of Kelley's cutting-plane method,
\textit{Mathematical Programming}, 160 (2016), pp.~321--351.

\bibitem{Dzhafarov22}
V.~Dzhafarov and T.~B\"{u}y\"{u}kk\"{o}ro\u{g}lu,
On one inner point algorithm for common Lyapunov functions,
\textit{Systems \& Control Letters}, 167 (2022), p.~105315.

\bibitem{Dzhafarov2015}
V.~Dzhafarov, T.~B\"{u}y\"{u}kk\"{o}ro\u{g}lu, and \c{S}.~Y{\i}lmaz,
On one application of convex optimization to stability of linear systems,
\textit{Trudy Instituta Matematiki i Mekhaniki UrO RAN}, 21 (2015), pp.~320--328.

\bibitem{HornJohnson94}
R.~A.~Horn and C.~R.~Johnson,
\textit{Matrix Analysis},
Cambridge University Press, Cambridge, 1994.

\bibitem{Jarre93}
F.~Jarre,
An interior-point method for minimizing the maximum eigenvalue of a linear combination of matrices,
\textit{SIAM Journal on Control and Optimization}, 31 (1993), pp.~1350--1377.

\bibitem{Kalinina2022}
E.~A.~Kalinina, Y.~A.~Smol'kin, and A.~Y.~Uteshev,
Stability and distance to instability for polynomial matrix families: Complex perturbations,
\textit{Linear and Multilinear Algebra}, 70 (2022), pp.~1291--1314.

\bibitem{Megiddo1984}
N.~Megiddo,
Linear programming in linear time when the dimension is fixed,
\textit{Journal of the Association for Computing Machinery}, 31 (1984), pp.~114--127.

\bibitem{Nesterov94}
Y.~Nesterov and A.~Nemirovskii,
\textit{Interior-Point Polynomial Algorithms in Convex Programming},
SIAM, Philadelphia, 1994.

\bibitem{Polyak21}
B.~T.~Polyak, M.~V.~Khlebnikov, and P.~S.~Scherbakov,
Linear matrix inequalities in control systems with uncertainty,
\textit{Automation and Remote Control}, 82 (2021), pp.~1--40.

\bibitem{Rockafellar73}
R.~T.~Rockafellar,
\textit{Convex Analysis},
Princeton University Press, Princeton, 1973.

\end{thebibliography}
\end{document}